\title{%Alexa Project 2019
Tetranacci Identities With
Squares, Dominoes, And Hexagonal Double-Strips 
%Tetranacci and Fibonacci-Tetranacci Identities: Combinatorial Proofs Using Hexagonal Double-strips
}
\author{Ziqian (Alexa) Jin}
\date{June 2019}
\newtheorem{theorem}{Theorem}
\begin{document}

\maketitle

\section{Abstract}
%We combinatorially prove Tetranacci, Tetranacci-Fibonacci, and additional iden-
%tities using ”squares” and ”dominoes” on hexagonal double-strips.
We combinatorially prove Tetranacci, Tetranacci-Fibonacci, and additional identities using only squares and dominoes on a hexagonal double-strip. Some of these are new proofs of old identities, and others
we believe have never been seen before.

\section{Introduction}

\indent 
As is well known,  the Fibonacci numbers count the number of tilings of a {\em rectangular} strip of length $n$ with squares and dominoes (or in other words, with {\em squares} and {\em double-squares}). There's a book \emph{Proofs That Really Count} by Benjamin and Quinn [1], which has numerous combinatorial proofs of Fibonacci identities.

Similar identities for the Tetranacci numbers can be found by using tilings of a rectangular strip of length $n$ with tiles of lengths 1, 2, 3, and 4 [4]. In this paper, we
look instead at a {\em hexagonal} double-strip with tiles of 
lengths only $1$ (hexagons) and $2$ (double-hexagons). 

First, some definitions. We define an $n$-hexagonal double-strip (later simplified as $n$-HD-strip) as a strip with two rows of a total of $n$ adjacent hexagons. Here is an example of an $8$-HD-strip:

\includegraphics[width=3in]{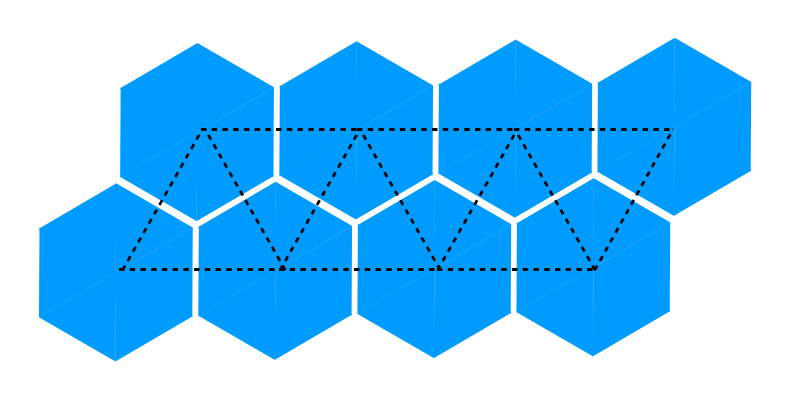}

In order to simplify the graph, we use points to represent the hexagons and dotted lines to indicate the adjacency of two hexagons. 

\includegraphics[width=3in]{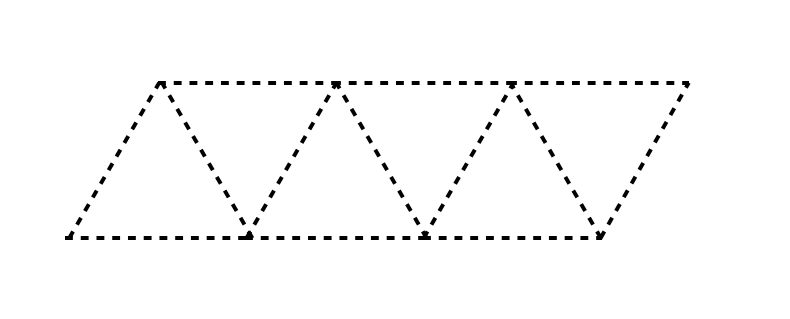}

 In addition, we use squares to represent hexagons and dominoes to represent double-hexagons to make the illustration similar to that in the combinatorial proofs of Fibonacci numbers in [1]. Here is a correspondence between the two graphs.

\includegraphics[width=3in]{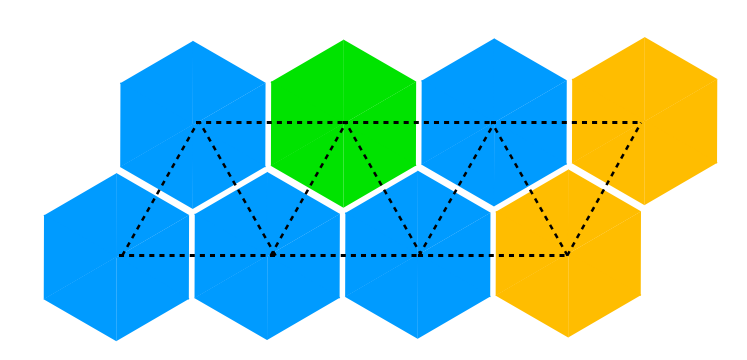}

The hexagon and double-hexagon on an HD-strip, which are pictured above, can be simplified to squares and dominoes on a dot-line frame, which is pictured below.

\includegraphics[width=3in]{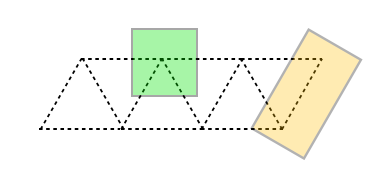}

Now, let us define $T_n$ to be the number of different ways to tile an $n$-HD-board  with
squares and dominoes. Thus, $T_4=8$, because there are eight ways to tile an HD-board of length $4$, as shown here:

\includegraphics[width=4in]{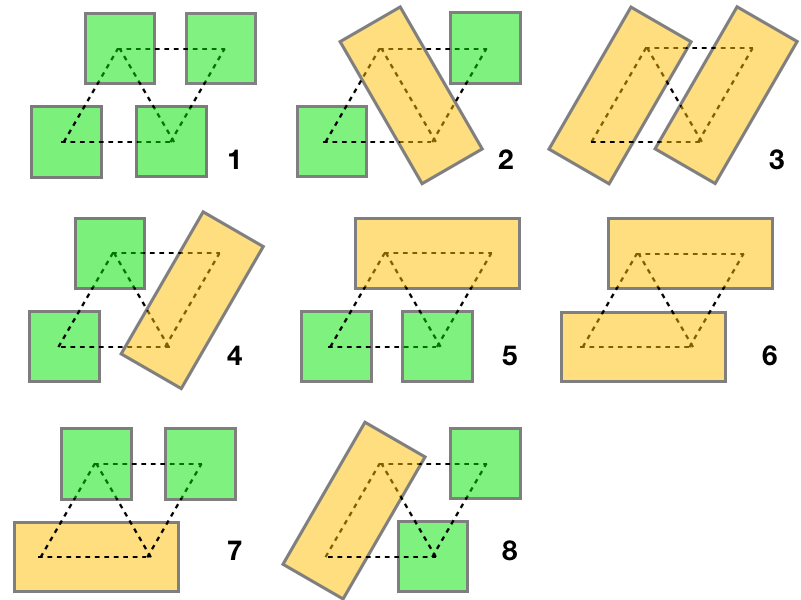}

With a few minutes of effort, we are able to find these values of $T_n$:

\begin{tabular}{c|ccccccccc}
\multicolumn{10}{c}{\rule[-5mm]{0mm}{12mm} \bf The first few values for $T_n$}\\
$n$ & 0 &1&2&3&4&5&6&7 &8  \\
\hline
$T_n$ & 1 & 1 & 2 & 4 & 8 & 15 & 29 & 56 & 108 \\
\end{tabular}
\ 

Because there is exactly one way to tile an HD-strip of length 0, we define that $T_0$ equal $1$.

From the table, we can easily find that for $3<n<9$,
\[ 
T_n = T_{n-1} + T_{n-2} + T_{n-3} +T_{n-4},
\]
and as we will show in a moment, 
this formula holds for all $n > 3$, giving us the 
well-known Tetranacci numbers. 

In the proof of our first theorem, as with most proofs in this paper, one of the two answers to the counting question breaks the problem into disjoint cases depending on a property. We refer to this as conditioning on that property.

Here are some basic definitions that we may refer to later in the proofs of this paper:

\noindent{\bf Cells}: places that will be covered with tiles. The cells in the graphs are indicated by dots.

\noindent{\bf $n$-HD-strip or $n$-HD-board}: hexagonal double-strip with $n$ cells.
 
\noindent{\bf $n$-Single strip}: a rectangular single row of $n$ cells, the visual representation used to prove combinatorial identities of Fibonacci Sequence in [1].
 
\noindent{\bf Cell number for an HD-strip}: count cell number from left to right, as illustrated by the graph. Note that the lower row has all cells with odd cell numbers, and that the upper row has all cells with even cell numbers.
 
\includegraphics[width=3in]{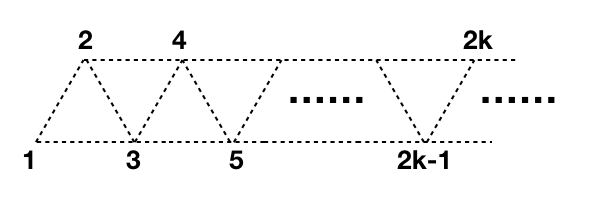}
 
\noindent{\bf Location of a square}: number of the cell it covers.
 
\noindent{\bf Left-domino}: left-inclined domino.
 
\includegraphics[width=1.2in]{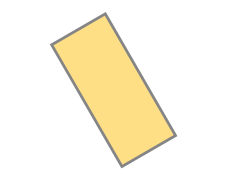}
 
\noindent{\bf Right-domino}: right-inclined domino.
 
\includegraphics[width=1in]{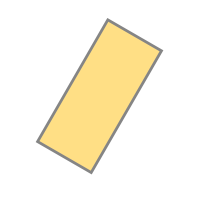}
 
\noindent{\bf Inclined domino}: either left-inclined or right-inclined domino.
  
\noindent{\bf Horizontal  domino}:
 
\includegraphics[width=1.2in]{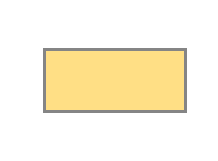}
 
\noindent{\bf Location of a domino}: number of the latter cell it covers. Note that the location of a left-domino is always an odd number bigger than or equal to $3$, the location of a right-domino is always an even number bigger than or equal to $2$, and the location of a horizontal domino is always an integer bigger than or equal to $3$.
 
\noindent{\bf Right-stacked dominoes}:
 
\includegraphics[width=1.5in]{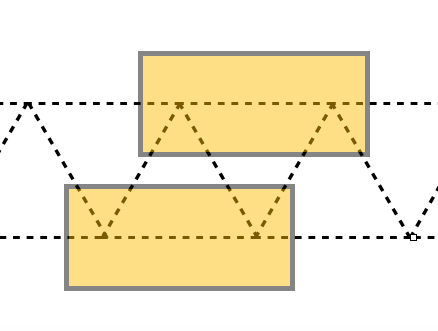}
 
\noindent{\bf Left-stacked dominoes}:
 
\includegraphics[width=1.5in]{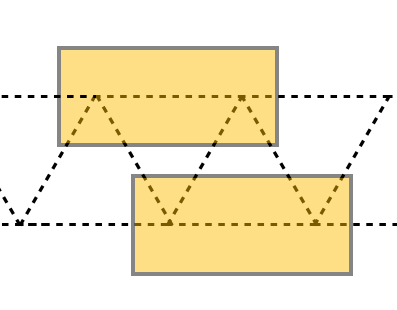}
 
\noindent{\bf Stacked dominoes}: either right-stacked or left-stacked dominoes.
 
\noindent{\bf Nth Diagonal}: the line that separates cells $n$ and $n-1$ from cells $n+1$ and $n+2$.
 
\includegraphics[width=3in]{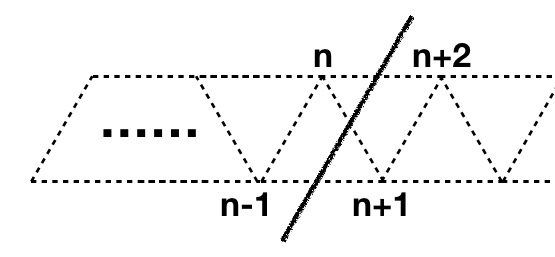}
 
\noindent{\bf Breakability at a given diagonal}: we call a diagonal breakable if no domino lies across it, and unbreakable if at least one domino lies across it. 

\begin{theorem}
For $n>3$,
\[ 
T_n = T_{n-1} + T_{n-2} + T_{n-3} +T_{n-4} 
\]
\end{theorem}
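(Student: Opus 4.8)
The plan is to count the tilings of an $n$-HD-strip directly, by conditioning on the tile that covers the last cell, cell $n$ --- exactly as one conditions on the last tile in the Fibonacci setting. Since a tile is either a square or a domino, and a domino pairs cell $n$ only with a cell differing from it by $1$ or $2$, while cells $n+1$ and $n+2$ lie off the board, the tile covering cell $n$ is one of exactly three things: a square on cell $n$, the inclined domino on cells $n-1$ and $n$, or the horizontal domino on cells $n-2$ and $n$. I expect these three cases to contribute $T_{n-1}$, $T_{n-2}$, and $T_{n-3}+T_{n-4}$ respectively, so that adding them yields the stated recurrence.

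First, if a square covers cell $n$, erasing it leaves a tiling of cells $1,\dots,n-1$, i.e.\ of an $(n-1)$-HD-strip, and conversely every such tiling extends uniquely; this case contributes $T_{n-1}$. Next, if an inclined domino covers cell $n$, it must occupy cells $n-1$ and $n$ (a left-domino when $n$ is odd, a right-domino when $n$ is even, but uniquely determined either way); erasing it leaves an $(n-2)$-HD-strip, contributing $T_{n-2}$.

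The case needing real care is a horizontal domino on cells $n-2$ and $n$. Here cell $n-1$ still needs a tile, and I will argue its only options are a square on cell $n-1$ or a horizontal domino on cells $n-3$ and $n-1$: an inclined domino at cell $n-1$ would have to use cell $n-2$ or cell $n$, both already occupied, and a horizontal domino using cell $n+1$ runs off the board. In the first sub-case, erasing the square together with the horizontal domino leaves an $(n-3)$-HD-strip (after checking no remaining tile reaches into cells $n-2,n-1,n$), contributing $T_{n-3}$; in the second, erasing both horizontal dominoes leaves an $(n-4)$-HD-strip, contributing $T_{n-4}$. Summing the four contributions gives
\[
T_n = T_{n-1} + T_{n-2} + T_{n-3} + T_{n-4}.
\]
The hypothesis $n>3$ is precisely what makes all four sub-boards legitimate, the extreme case being $T_{n-4}=T_0=1$, the empty strip.

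The main obstacle --- really the only place requiring care --- is the bookkeeping in the horizontal-domino case: one must invoke the exact hexagonal adjacency rule (cell $k$ touches only cells $k\pm1$ and $k\pm2$) together with the right boundary of the strip to pin down cell $n-1$'s cover to exactly those two possibilities, and then confirm that erasing the identified tiles truly restricts to an honest tiling of the shorter HD-strip and that the four cases are mutually exclusive and exhaustive. Everything else is the routine observation that peeling tiles off the right end is a bijection between tilings counted by $T_n$ and tilings of the appropriate shorter HD-strips.
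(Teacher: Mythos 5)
Your proposal is correct and is essentially identical to the paper's proof: both condition on the tile covering the last cell (square, inclined domino, or horizontal domino on cells $n-2$ and $n$), and in the horizontal-domino case further condition on the tile covering cell $n-1$ (square or a second horizontal domino), yielding the four contributions $T_{n-1}$, $T_{n-2}$, $T_{n-3}$, $T_{n-4}$. Your added care in justifying that cell $n-1$ has only those two options is a welcome explicit check that the paper leaves to the reader.
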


\begin{proof}

Question: How many tilings of an $n$-HD-board exist?
 
Answer 1: By definition, there are $T_n$ such tilings.
 
Answer 2: Condition on the last tile of the $n$-HD-strip. If the last tile is:
 
\begin{enumerate} 
\item a square, there is an ($n$-1)-HD-board left, leading to $T_{n-1}$ tilings.
 
\includegraphics[width=3in]{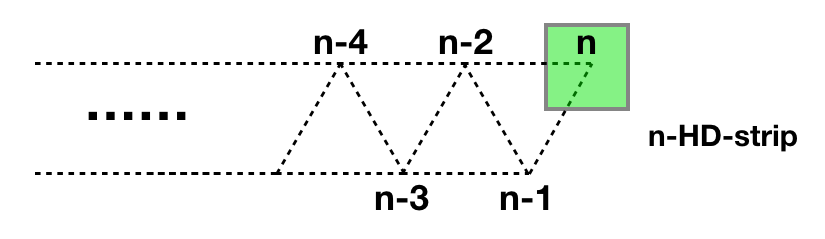}
 
\item an inclined domino: there is an $(n-2)$-HD-board left, leading to $T_{n-2}$ tilings.
 
\includegraphics[width=3in]{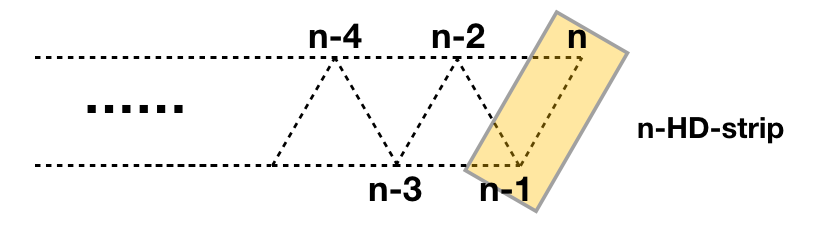}
 
\item a horizontal domino, condition on the tile on cell $(n-1)$. If the tile is:
 
 \begin{enumerate}
\item  a square, there is an $(n-3)$-HD-board left, leading to $T_{n-3}$ tilings.
 
\includegraphics[width=3in]{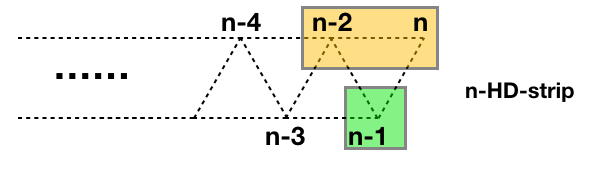}
 \indent
\item  a horizontal domino, there is an $(n-4)$-HD-hoard left, leading to $T_{n-4}$ tilings.

\includegraphics[width=3in]{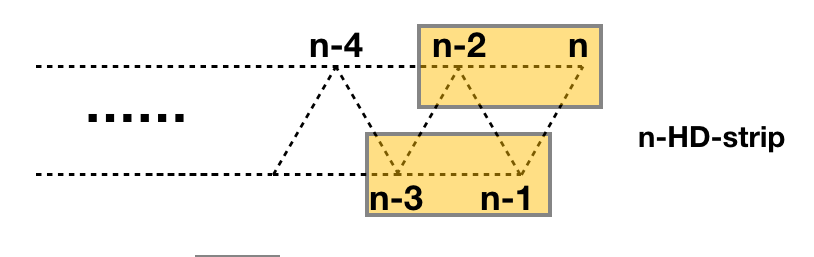}
 \end{enumerate}

\end{enumerate}
\end{proof}

By proving the theorem above, we verify that the sequence  $T_n$ satisfies the property that the sum of four consecutive terms is equal to the next term. Such sequence has an official name: the Tetranaccis [5]. Note that our sequence starts with $T_0=1$, $T_1=1$, $T_2=2$, and $T_3=4$. Particularly, we define $T_{-1}$ to be $0$, because $T_{-1} = T_3 - T_2 - T_1 - T_0$.

\section{New Proofs of Old Theorems}

For these next theorems, we use the combinatorial technique of finding a correspondence between two sets of objects. The formula was proved algebraically in [3] and combinatorially in [4] using tiles of lengths $1,2,\dots,r$ on a rectangular single strip. But here we propose a novel combinatorial approach.

\begin{theorem}
For $n>5$, we have:
\[
2T_{n-1} =T_n + T_{n-5} 
\]
\end{theorem}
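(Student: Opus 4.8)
The plan is to give a bijective proof in the paper's Question/Answer style. I would count the number of pairs $(t,c)$, where $t$ is a tiling of the $(n-1)$-HD-board and $c\in\{1,2\}$ is a color; Answer 1 is immediately $2T_{n-1}$. For Answer 2, I would describe a correspondence showing this same quantity equals $T_n+T_{n-5}$: the color-$1$ pairs, together with most of the color-$2$ pairs, will match the tilings of the $n$-HD-board, while the remaining color-$2$ pairs will match the tilings of the $(n-5)$-HD-board. The point is that this is exactly Theorem 1 applied once to $T_n$ and once to $T_{n-1}$ (both applications are legal since $n>5$), but realized as an explicit bijection rather than an algebraic manipulation.

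Concretely, a color-$1$ pair $(t,1)$ is sent to the $n$-board tiling obtained by appending a square on cell $n$, a bijection onto the $n$-board tilings whose last tile is a square. For a color-$2$ pair $(t,2)$, I would condition on the last tile of $t$ exactly as in the proof of Theorem 1: if $t$ ends in a square on cell $n-1$, delete it and append the inclined domino on cells $n-1,n$; if $t$ ends in an inclined domino, delete it and append a square on cell $n-1$ together with a horizontal domino on cells $n-2,n$; if $t$ ends in a horizontal domino on cells $n-3,n-1$ with a square on cell $n-2$, delete both and append the two stacked horizontal dominoes covering cells $n-3,\dots,n$; and finally, if $t$ ends in two stacked horizontal dominoes covering cells $n-4,\dots,n-1$, delete those two dominoes and record the resulting $(n-5)$-board tiling. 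The first three cases land bijectively on the $n$-board tilings whose last tile is, respectively, an inclined domino, a horizontal domino lying over a square, and a pair of stacked horizontal dominoes; combined with the color-$1$ image these exhaust all $T_n$ tilings of the $n$-board by Theorem 1. The fourth case is a bijection onto all $T_{n-5}$ tilings of the $(n-5)$-board.

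Counting the two sides then closes the argument. Answer 1 gives $2T_{n-1}$. Answer 2 gives the $n$-board tilings, grouped by last tile as $T_{n-1}+T_{n-2}+T_{n-3}+T_{n-4}=T_n$, plus the $(n-5)$-board tilings, contributing $T_{n-5}$; so the count is $T_n+T_{n-5}$, as desired. (One can also phrase the color-$2$ side directly: by Theorem 1 applied to the $(n-1)$-board its tilings split as $T_{n-2}+T_{n-3}+T_{n-4}+T_{n-5}$, and the map sends the first three groups into the $n$-board and the last group onto the $(n-5)$-board.)

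The only thing that needs care — and where I expect the friction to be, though it is bookkeeping rather than anything conceptual — is checking that each surgery near the right end is well defined and reversible on the HD-strip. Once one decides to "append an inclined domino on cells $n-1,n$" or "append a horizontal domino on cells $n-2,n$", the tile is forced, since cells $n-1$ and $n$ lie in different rows while cells $n-2$ and $n$ lie in the same row; and the inverse operation just reads off the last one or two tiles. The main vigilance points are parities and domino orientations: for instance, verifying that the two stacked horizontal dominoes appended in the third case are genuinely the configuration the proof of Theorem 1 calls a horizontal domino on cell $n$ over a horizontal domino on cell $n-1$, and that the left/right inclination appended in the first case is the unique inclined domino at location $n$.
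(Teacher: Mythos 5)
Your proposal is correct and is essentially the paper's own proof: the same four surgeries on the right end of the $(n-1)$-board (append a square; swap a terminal square for an inclined domino; swap an inclined domino for a square plus horizontal domino; convert a horizontal-domino-over-square ending into stacked dominoes; strip off stacked dominoes to land in the $(n-5)$-board), merely phrased as counting colored pairs $(t,c)$ rather than as the paper's 1-to-2 correspondence. Your explicit check that the images partition the $n$-board tilings according to the Theorem 1 case analysis is a welcome addition the paper leaves implicit.
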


\begin{proof}
We first create two sets.
 
Set 1: Tilings of an $(n-1)$-HD-strip. This set has size $T_{n-1}$.
 
Set 2: Tilings of an $n$-HD-strip or an $(n-5)$-HD-strip. This set has size $T_n + T_{n-5}$
 
Correspondence: to prove the theorem, we establish a 1-to-2 correspondence between Set 1 and Set 2.
Specifically, we use each $(n-1)$-HD-strip in Set 1 to create two separate HD-strips in Set 2 that have length $n$ or $(n-5)$, and we do so in such a way that every 
possible $n$-HD-strip and 
$(n-5)$-HD-strip is created exactly once. 

Given a particular $(n-1)$-HD-strip, we first use it to create an  $n$-HD-strip by appending a square to the $(n-1)$-HD-strip.
 
\includegraphics[width=3in]{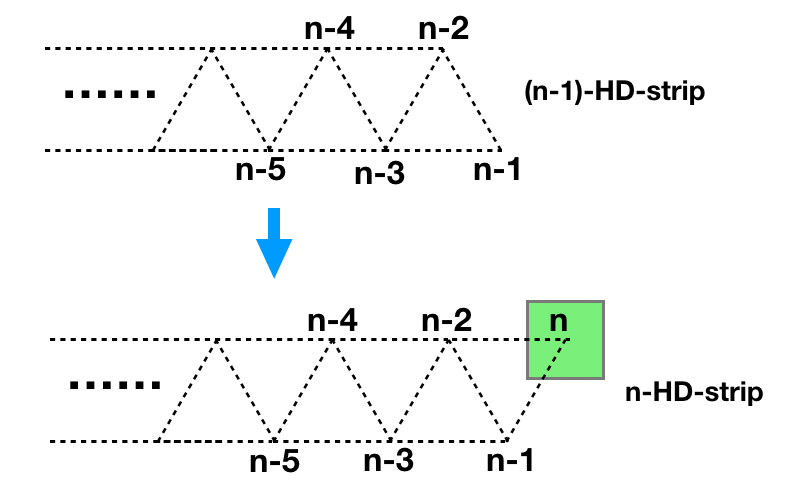}
 
Now, given that same $(n-1)$-HD-strip, 
we use it a second time to create one of the following, depending on the ending of that $(n-1)$-HD-strip. 

If the $(n-1)$-HD-strip ends with:

\begin{enumerate}

\item a square, remove that square and append an inclined domino to the resultant $(n-2)$-HD-strip.
 
\includegraphics[width=3in]{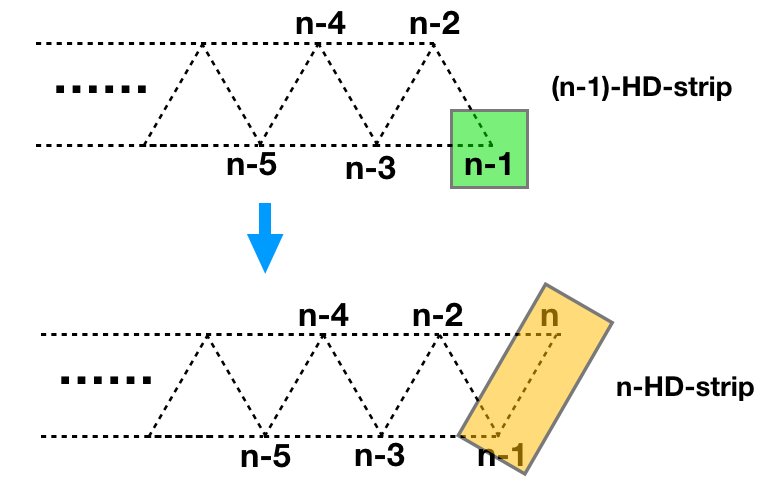}
 
\item an inclined domino, remove that domino and append a square and a horizontal domino to the end of the resultant $(n-3)$-HD-strip, as illustrated by the graph.
 
\includegraphics[width=3in]{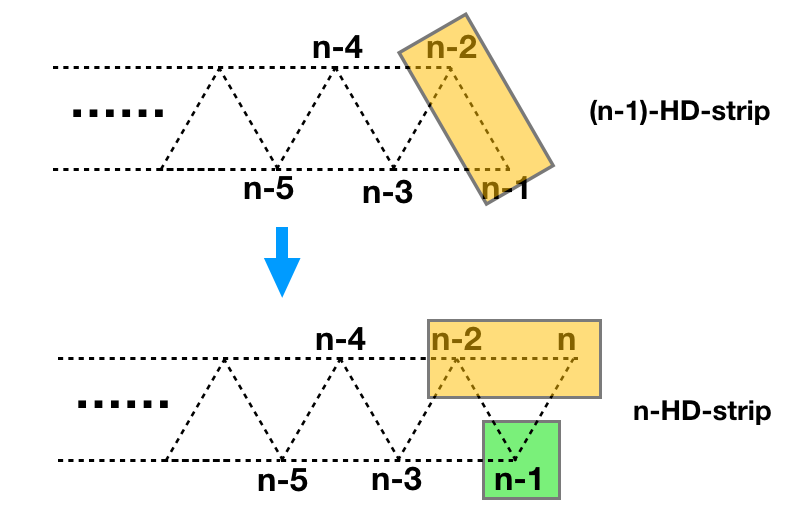}
 
 \item a horizontal domino, if the end: 
 \begin{enumerate}

 \item is two stacked dominoes, remove them both to get an $(n-5)$-HD-strip.
 
 \includegraphics[width=3in]{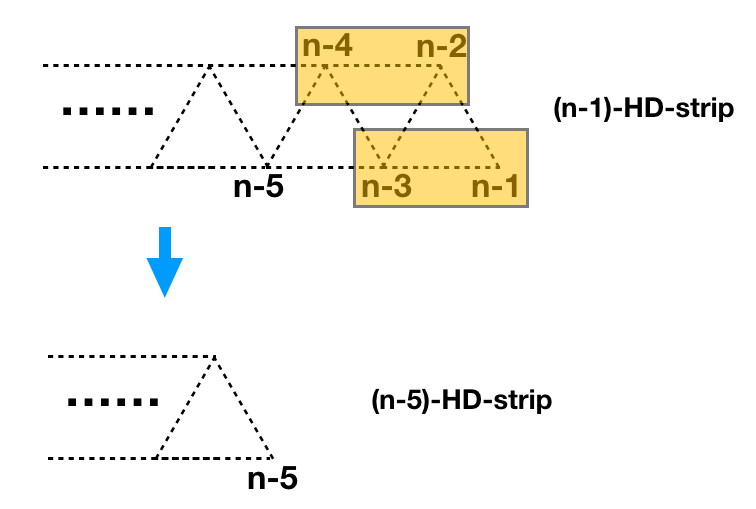}
 
 \item is a horizontal domino and a square, replace the square with a horizontal domino.

\includegraphics[width=3in]{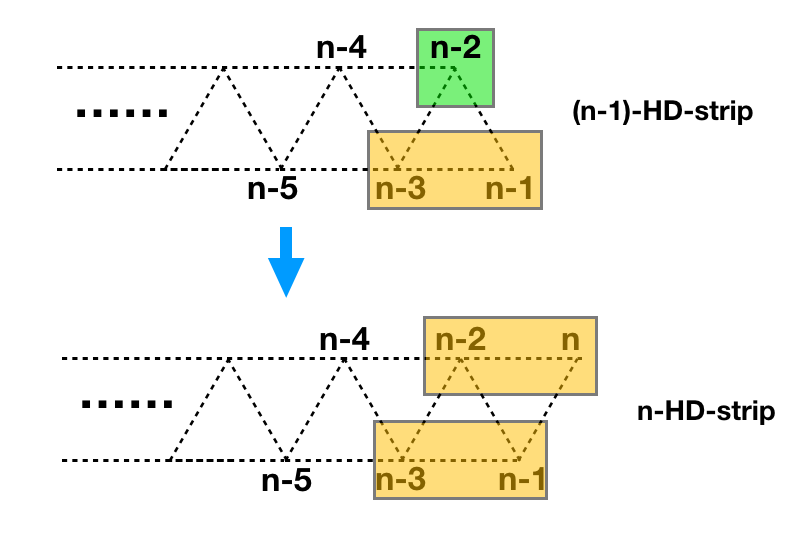}
 
 \end{enumerate}

\end{enumerate}

To verify that this is a 1-to-2 correspondence, we note that every tiling of an $n$-HD-strip or an $(n-5)$-HD-strip is indeed created exactly once by using a tiling of an  $(n-1)$-HD-strip exactly twice.

\end{proof}

The formulas in Theorem 3 and 4 were proved algebraically by Waddill [2], but here we give combinatorial proofs.

\begin{theorem}
For $n>3$, we have:
\[
T_{2n} = T_{n}^2 + T_{n-1}^2 +T_{n-2}^2+2T_{n-1}(T_{n-2} + T_{n-3}) 
\]
\end{theorem}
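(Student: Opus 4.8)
The plan is to imitate the classic ``split in the middle'' proof of $F_{m+n}=F_{m+1}F_{n+1}+F_mF_n$, adapted to the HD-board: I would count tilings of a $2n$-HD-board by conditioning on breakability at the $n$-th diagonal, which separates cells $n-1,n$ from cells $n+1,n+2$. Answer~1 is $T_{2n}$ by definition. For Answer~2, the first and central step is to list exactly which dominoes can straddle the $n$-th diagonal. A straddling domino must cover one cell of $\{n-1,n\}$ and one cell of $\{n+1,n+2\}$, and because the lower row carries the odd cell numbers and the upper row the even ones, the only candidates are the inclined domino on $\{n,n+1\}$, the horizontal domino on $\{n-1,n+1\}$, and the horizontal domino on $\{n,n+2\}$. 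Checking which of these can coexist (the inclined one shares a cell with each horizontal one, while the two horizontal ones are disjoint), the complete list of straddling configurations is: (a) nothing straddles; (b) only the inclined domino on $\{n,n+1\}$; (c) only the horizontal domino on $\{n-1,n+1\}$; (d) only the horizontal domino on $\{n,n+2\}$; (e) the pair of stacked dominoes $\{n-1,n+1\}$ and $\{n,n+2\}$ together.

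Next I would count the tilings in each case. Case~(a) is a free split into two $n$-HD-boards, contributing $T_n^2$. Case~(b) deletes cells $n,n+1$ and leaves two independent $(n-1)$-HD-boards, contributing $T_{n-1}^2$. Case~(e) deletes cells $n-1,n,n+1,n+2$ and leaves two $(n-2)$-HD-boards, contributing $T_{n-2}^2$. Cases~(c) and~(d) are mirror images of one another and are where the cross term comes from: in case~(c) the part to the right of the straddling domino is a clean $(n-1)$-HD-board ($T_{n-1}$ ways), while the part to the left consists of cells $1,\dots,n-2$ together with the now-exposed cell $n$, which can only be finished by a square on cell $n$ (leaving an $(n-2)$-HD-board) or by a horizontal domino on $\{n-2,n\}$ (leaving an $(n-3)$-HD-board); hence case~(c) contributes $T_{n-1}(T_{n-2}+T_{n-3})$, and by the left-right symmetry case~(d) contributes the same, for a combined $2T_{n-1}(T_{n-2}+T_{n-3})$. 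Adding the five contributions yields exactly $T_n^2+T_{n-1}^2+T_{n-2}^2+2T_{n-1}(T_{n-2}+T_{n-3})$.

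I expect the main obstacle to be precisely the straddling-domino enumeration together with the ``exposed cell'' analysis in cases~(c) and~(d): one has to argue carefully, from the parity of the cell numbers and the definitions of horizontal versus inclined dominoes, that a single horizontal domino across the $n$-th diagonal really does leave one uncovered cell whose only surviving neighbor is interior, and that this cell can then be completed in precisely $T_{n-2}+T_{n-3}$ ways (square, or horizontal domino into the interior, and nothing else). Everything after that is routine bookkeeping. A secondary point worth one sentence is checking that the hypothesis $n>3$ is exactly what makes all the smaller boards that appear — the $(n-1)$-, $(n-2)$-, and $(n-3)$-HD-boards — legitimately defined.
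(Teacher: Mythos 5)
Your proposal is correct and follows essentially the same route as the paper: both condition on breakability at the $n$th diagonal, identify the same five straddling configurations (none, the inclined domino, each single horizontal domino, and the stacked pair), and obtain the cross term $2T_{n-1}(T_{n-2}+T_{n-3})$ by sub-conditioning on whether the exposed cell is covered by a square or by a horizontal domino reaching into the interior. The only cosmetic difference is that the paper fixes the parity of $n$ (``without loss of generality, assume $n$ is odd'') before naming which dominoes can cross, whereas you derive the candidate list directly from the row-parity of the cell numbers.
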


\begin{proof}
Question: How many tilings of a $2n$-HD-board exist?
 
Answer 1: $T_{2n}$, by definition.
 
Answer 2:
Condition on the breakability of the $n$th diagonal of a $2n$-HD-strip.
Without loss of generality, we assume that $n$ is odd.

\includegraphics[width=3in]{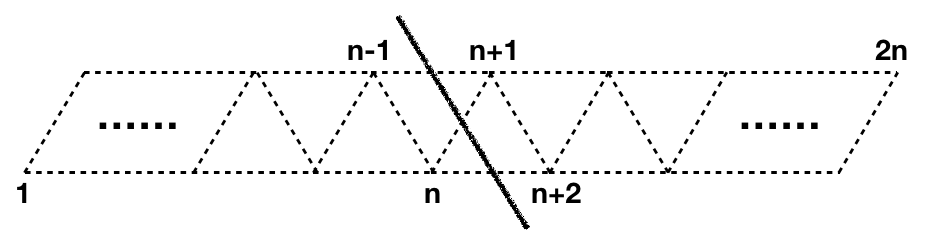}

\begin{enumerate}
  
 \item If the $n$th diagonal is breakable, then the $2n$-HD-strip can be divided into two $n$-HD-strips. Thus, there are $(T_n)^2$ ways of tilings.

 \item If it is unbreakable, there are 4 possible situations:
 \begin{enumerate}
    
 \item If an inclined domino crosses the $n$th diagonal, the 2n-HD-strip can be broken up into two $(n-1)$-HD-strips, so there are $(T_{n-1})^2$ ways of tilings.
 
 \includegraphics[width=3in]{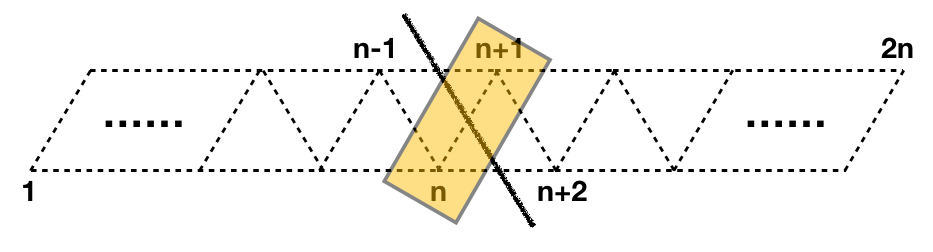}

 \item If two horizontal dominoes cross the $n$th diagonal, there are two $(n-2)$-HD-strips left, leading to $(T_{n-2})^2$ ways of tilings.
 
 \includegraphics[width=3in]{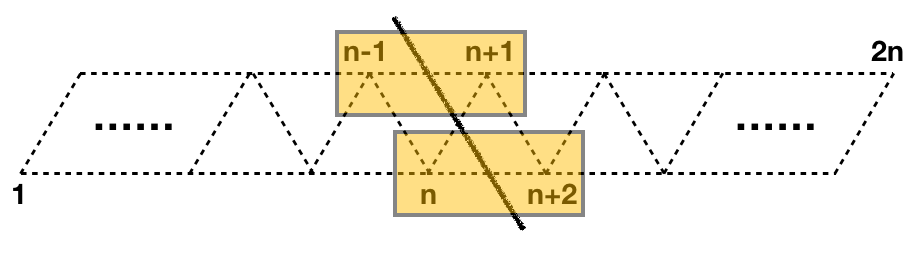}

 \item If only a horizontal domino in the upper row crosses the $n$th diagonal, further condition on the tile on cell $n$. If it is:
 \begin{enumerate}
     \item a square, there are $T_{n-2}T_{n-1}$ tilings.
     
     \includegraphics[width=3in]{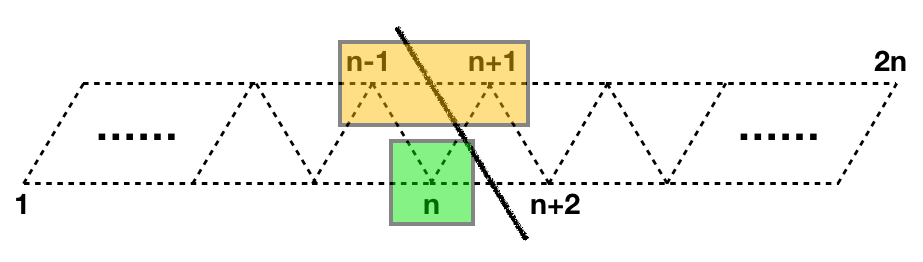}
     
     \item a horizontal domino covering cells $n$ and $n-2$, there are $T_{n-3}T_{n-1}$ tilings.
     
     \includegraphics[width=3in]{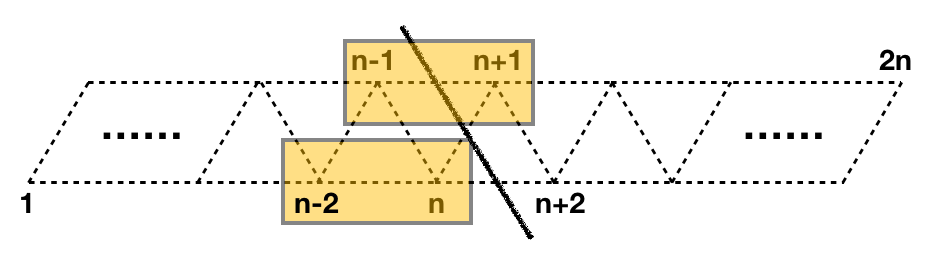}
     
 \end{enumerate}
 
 so there are $T_{n-1}(T_{n-2} + T_{n-3})$ tilings overall.

 \item If only a horizontal domino in the lower row crosses the diagonal, further condition on the tile on cell $n+1$. Similar to part c, we will get $T_{n-1}(T_{n-2} + T_{n-3})$ tilings overall.

 \end{enumerate}

\end{enumerate}
Summing the tilings in part (a) to (d) gives us an overall of $T_{n}^2 + T_{n-1}^2 +T_{n-2}^2+2T_{n-1}(T_{n-2} + T_{n-3}) $ tilings.

\end{proof}

\begin{theorem}
For $n>4$, we have:
\[
T_n -1 = T_{n-2} + 2T_{n-3} + 3(T_{n-4} + T_{n-5} + \cdots + T_1 + T_0) 
\]
\end{theorem}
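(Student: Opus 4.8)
The plan is to keep the Question/Answer format of the previous proofs, reading the left side as a count: $T_n-1$ is the number of tilings of the $n$-HD-board that use at least one domino, since the only domino-free tiling is the all-squares tiling, which is exactly what the ``$-1$'' removes. For Answer~2 I would condition on the \emph{last unbreakable diagonal}: given a tiling with at least one domino, let $k$ be the largest index for which the $k$th diagonal is unbreakable. This is well defined because every domino lies across at least one diagonal (an inclined domino on cells $\{a,a+1\}$ crosses the $a$th diagonal, and a horizontal domino on cells $\{a,a+2\}$ crosses the $a$th and the $(a+1)$st).

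The heart of the argument is to pin down the tiling near diagonal $k$. First I would show that cells $k+2,\dots,n$ are all squares: any domino meeting a cell $\ge k+2$ would either sit entirely among cells $\ge k+1$ and hence cross a diagonal $\ge k+1$, or be the horizontal domino on cells $\{k,k+2\}$, which crosses the $(k+1)$st diagonal --- either way contradicting the maximality of $k$. Since the $(k+1)$st diagonal is breakable, the only dominoes that can cross the $k$th diagonal are (i)~an inclined domino on cells $\{k,k+1\}$, or (ii)~a horizontal domino on cells $\{k-1,k+1\}$; the ``two stacked dominoes'' and ``horizontal domino on $\{k,k+2\}$'' options that appeared at the $n$th diagonal in Theorem~3 are now impossible. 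In case~(i), cell $k-1$ and everything to its left form an arbitrary $(k-1)$-HD-strip; letting $k$ range over $1,\dots,n-1$ this contributes $T_0+T_1+\dots+T_{n-2}$. In case~(ii), cell $k$ still must be covered, and the only legal choices are a square on cell $k$ --- leaving an arbitrary $(k-2)$-HD-strip to the left, contributing $T_0+\dots+T_{n-3}$ as $k$ runs over $2,\dots,n-1$ --- or a horizontal domino on cells $\{k-2,k\}$, i.e.\ a pair of stacked dominoes, leaving an arbitrary $(k-3)$-HD-strip to the left and contributing $T_0+\dots+T_{n-4}$ as $k$ runs over $3,\dots,n-1$.

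Adding the three cases gives $(T_0+\dots+T_{n-2})+(T_0+\dots+T_{n-3})+(T_0+\dots+T_{n-4})$. Collecting by index, $T_{n-2}$ occurs once, $T_{n-3}$ occurs twice, and every $T_m$ with $m\le n-4$ occurs three times, so the total is $T_{n-2}+2T_{n-3}+3(T_{n-4}+T_{n-5}+\dots+T_1+T_0)$, which is the right side.

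I expect the main obstacle to be the structural step above: one must argue carefully that ``everything to the right of the last unbreakable diagonal is squares'' is genuinely forced, and track the parity of $k$ (which decides whether the domino crossing the $k$th diagonal in case~(i) is a left- or a right-domino, and likewise for the stacked pair in case~(ii)) so that no spurious case appears and none is lost, in particular at the extremes $k=1$ and $k=n-1$. A quick check of the identity for $n=5$ and $n=6$ against the table of $T_n$ is a cheap safeguard for the three-case split and its bookkeeping.
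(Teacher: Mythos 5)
Your proof is correct, but it is best described as the left--right mirror of the paper's argument rather than a wholly new decomposition. The paper conditions on the location of the \emph{first} domino (so every cell to its left is forced to be a square), whereas you condition on the \emph{last unbreakable diagonal} (so every cell to the right of the crossing domino is forced to be a square); after reflection the case analyses match term for term --- your case (i) is the paper's ``first domino is inclined'' sum $T_0+\cdots+T_{n-2}$, your (ii-a) is its ``first domino is horizontal with a square on cell $k-1$'' sum $T_0+\cdots+T_{n-3}$, and your (ii-b) is its ``two stacked horizontal dominoes'' sum $T_0+\cdots+T_{n-4}$. The real difference lies in what has to be justified: with ``first domino'' the all-squares prefix is immediate from minimality of the location, while your framing needs the structural argument (which you supply correctly) that no domino can touch a cell beyond $k+1$ and that only the inclined domino on $\{k,k+1\}$ or the horizontal domino on $\{k-1,k+1\}$ can cross diagonal $k$ without also crossing diagonal $k+1$. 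That costs a little extra work, but it buys a completely explicit check that the cases are disjoint and exhaustive, and it reuses the breakability machinery of Theorem 3; your worry about tracking the parity of $k$ is unnecessary, since whether the crossing domino is left- or right-inclined never affects the count.
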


\begin{proof}
Question: How many tilings of an $n$-HD-board have at least one domino?
 
Answer 1: $T_n-1$, subtracting the all-square tiling from the set of all tilings of an $n$-HD-strip.
 
Answer 2: Condition on the location of the first domino. Note that it can be either a horizontal domino, left-domino, or right-domino. Also note that the location can be an integer in the set [2,n], and all cells with cell numbers smaller than the location of the first domino are squares. When the location is:
\begin{enumerate}
  \item 2, the domino can only be a right-domino. So there are $T_{n-2}$ tilings.
  
  \includegraphics[width=3in]{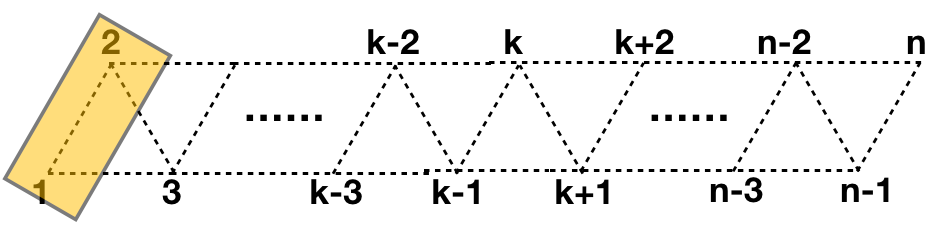}
  
  \item $k$ with $2<k<n$, the domino can be either a horizontal domino or an inclined domino (left-domino when $k$ is odd and right-domino when $k$ is even). When it is:
  \begin{enumerate}
      \item an inclined domino, there are $T_{n-k}$ tilings.
      
      \includegraphics[width=3in]{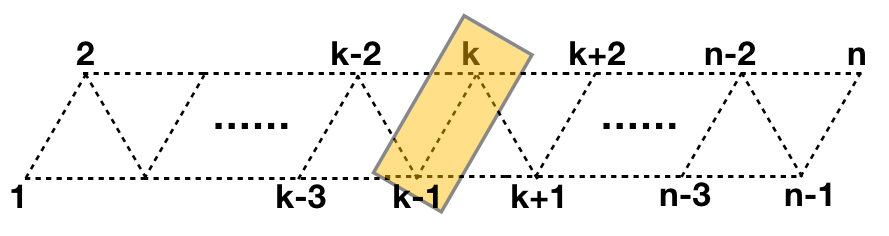}
      
      \item a horizontal domino, further condition on the tile covering cell $k-1$. If it is a square, there are $T_{n-k}$ tilings. If it is a domino, it can only be a horizontal domino covering cells $k-1$ and $k+1$, so there are $T_{n-k-1}$ tilings.
      
      \includegraphics[width=3in]{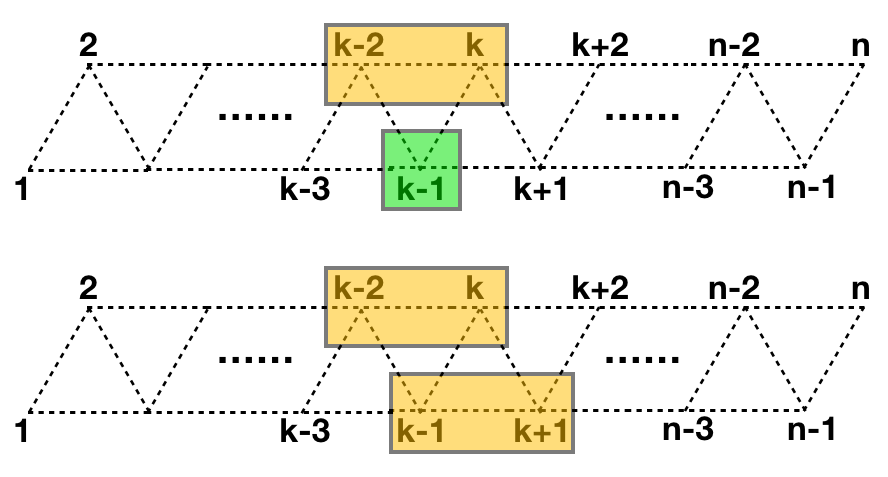}
  \end{enumerate}
  \item $n$, when the domino is:
  \begin{enumerate}
      \item a horizontal domino, the tile covering cell $n-1$ must be a square, so there are $T_0$ tilings.
      
      \includegraphics[width=3in]{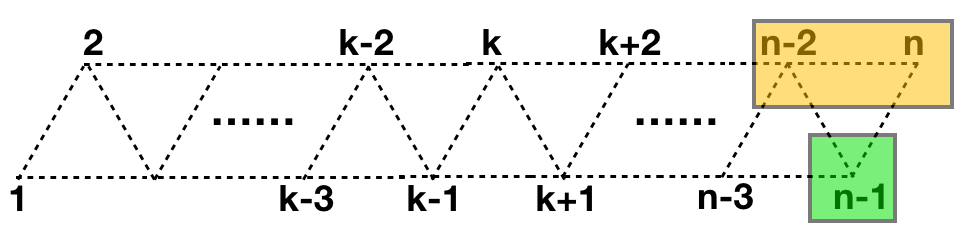}
      
      \item an inclined domino, there are also $T_0$ tilings.
      
      \includegraphics[width=3in]{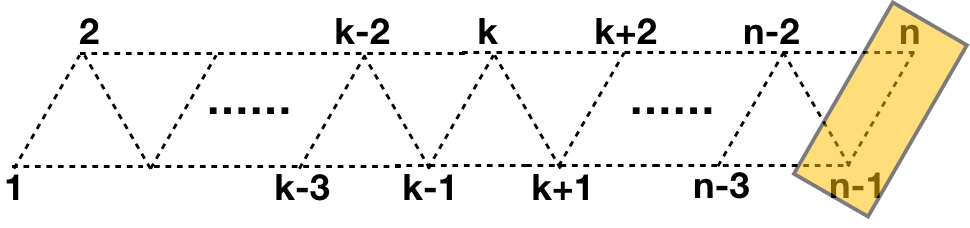}
  \end{enumerate}

\end{enumerate}

Therefore, overall there are: 

   $ T_{n-2}+ (T_{n-3} +T_{n-3} +T_{n-4}) +  (T_{n-4} +T_{n-4} +T_{n-5}) + \dots +  (T_1 +T_1 +T_0) + T_0 + T_0 = T_{n-2} + 2T_{n-3} + 3(T_{n-4} + T_{n-5} + \cdots + T_1 + T_0)$ tilings. 

\end{proof}

\section{New Proofs of New Theorems}

The following theorems appear to be completely new, as we have not found anything similar in the mathematical literature. 

We begin with a lemma.

\newtheorem{lemma}
{Lemma} 
\begin{lemma}

The number of tilings of a $2n$-HD-strip that consist of only right-dominoes and squares equals $2^n$.

\end{lemma}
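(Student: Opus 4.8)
My plan is to exploit the rigid geometry of right-dominoes to decompose the strip. Recall that a right-domino sits at an even location $2k$ and therefore covers exactly cells $2k-1$ and $2k$. Hence, in any tiling that uses only squares and right-dominoes, every tile — square or domino — lies entirely within one of the $n$ two-cell blocks $B_k = \{2k-1,\,2k\}$, $k = 1,\dots,n$: no right-domino can straddle the cut between cell $2k$ and cell $2k+1$, since such a domino would have to sit at the odd location $2k+1$, which is impossible. So such a tiling is exactly a free, block-by-block choice of how to tile each $B_k$.

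The first (and essentially only substantive) step is to confirm that decomposition; after that the count is immediate. Each block $B_k$ admits precisely two tilings: two squares, or a single right-domino covering both of its cells. Since the choices for distinct blocks are independent, the number of tilings of the whole $2n$-HD-strip is $2 \cdot 2 \cdots 2 = 2^n$. Equivalently, such tilings are in bijection with length-$n$ binary strings, the $k$th bit recording whether $B_k$ is covered by a domino.

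Alternatively, to match the recursive style of Theorem 1, I would let $a_n$ denote the number of such tilings and condition on the tile covering cell $2n$. If it is a square, then cell $2n-1$ must also be a square, because the only right-domino meeting cell $2n-1$ would occupy location $2n$; if it is a right-domino, it covers cells $2n-1$ and $2n$. In either case a $2(n-1)$-HD-strip remains, so $a_n = 2a_{n-1}$, and with $a_0 = 1$ this gives $a_n = 2^n$. The only point requiring care in either approach is the geometric fact that a right-domino always occupies a pair $\{2k-1,\,2k\}$ — that is, lives at an even location — which is precisely what forces the block decomposition and forbids any tile from crossing an even diagonal; everything else is routine.
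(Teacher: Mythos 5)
Your proof is correct and takes essentially the same approach as the paper: the paper also cuts the strip at the even diagonals $2, 4, \dots, 2n-2$ into $n$ double-cells, notes these cuts are all breakable when only squares and right-dominoes are used, and multiplies the two choices per block to get $2^n$. Your explicit justification of why no right-domino can cross an even diagonal, and your alternative recursion $a_n = 2a_{n-1}$, are just slightly more detailed packaging of the same argument.
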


\begin{proof}
we first need to draw the 2th, 4th,..., $(2n-2)$th diagonals, which decompose the $2n$-HD-strip into $n$ ``double-cells". Since there are only squares and right-dominoes, all the diagonals drawn are breakable. There are $2$ options to fill each of the $n$ double-cells: a right-domino or two squares. Therefore, there are $2^n$ tilings of the $2n$-HD-strip made up by only squares and right-dominoes.

\includegraphics[width=3in]{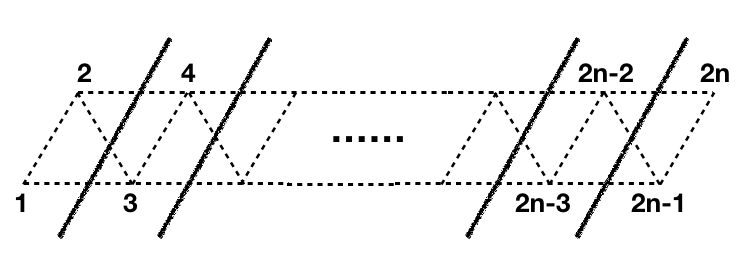}

\end{proof}

\begin{theorem}
For $n>2$, we have
\[
T_{2n} - 2^n= 2T_{n-3}+ \sum_{i=1}^{n-1} 2^{i}T_{2n-2i-2} + 5*\sum_{i=0}^{n-3} 2^{i}T_{2n-2i-5}
\]
\end{theorem}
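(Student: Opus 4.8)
The plan is to read the left side, via the Lemma, as a count of ``impure'' tilings: the Lemma says exactly $2^n$ tilings of a $2n$-HD-strip use only squares and right-dominoes, so if we call those tilings \emph{pure} and all others \emph{impure}, then $T_{2n}-2^n$ is the number of impure tilings of a $2n$-HD-strip. The structural fact I would record first is that a square or a right-domino lies across no even-numbered diagonal, whereas a left-domino or a horizontal domino lies across exactly one of the even diagonals $2,4,\dots,2n-2$ (for a left-domino on cells $\ell-1,\ell$ it is the $(\ell-1)$st diagonal; for a horizontal domino on cells $p,p+2$ it is whichever of the $p$th and $(p+1)$st is even). Hence a tiling is impure if and only if some even diagonal is unbreakable.

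I would then condition on the least $k$, with $1\le k\le n-1$, for which the $(2k)$th diagonal is unbreakable. Since the even diagonals $2,\dots,2k-2$ are all breakable, cells $1,\dots,2k-2$ carry a complete tiling by squares and right-dominoes only, and by the Lemma (applied to a $2(k-1)$-strip) there are $2^{k-1}$ of these. What remains is a finite case analysis of how the $(2k)$th diagonal is crossed: the only tiles that can cross it are the left-domino on cells $2k,2k+1$, the horizontal domino on cells $2k-1,2k+1$, and the horizontal domino on cells $2k,2k+2$; the left-domino overlaps each of the two horizontal dominoes, so the crossing consists of exactly one of (i) the left-domino, (ii) the $\{2k-1,2k+1\}$ horizontal domino alone, (iii) the $\{2k,2k+2\}$ horizontal domino alone, (iv) both horizontal dominoes. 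In (i) cell $2k-1$ is then forced to be a square and the $(2k+1)$st diagonal is breakable, leaving a free $(2n-2k-1)$-strip; (ii) is the symmetric picture with cell $2k$ forced, again $T_{2n-2k-1}$; in (iii) cell $2k-1$ is forced to be a square, and then cell $2k+1$ is either a square (leaving $T_{2n-2k-2}$) or the horizontal domino on $2k+1,2k+3$ (leaving $T_{2n-2k-3}$); in (iv) the $(2k+2)$th diagonal is immediately breakable, leaving $T_{2n-2k-2}$.

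Adding these contributions against the weight $2^{k-1}$ and summing over $k$ from $1$ to $n-1$, with $T_{-1}=0$ covering the single boundary occurrence, yields
\[
T_{2n}-2^n=2\sum_{k=1}^{n-1}2^{k-1}T_{2n-2k-1}+2\sum_{k=1}^{n-1}2^{k-1}T_{2n-2k-2}+\sum_{k=1}^{n-1}2^{k-1}T_{2n-2k-3}.
\]
The rest is reindexing. The $k=1$ term of the first sum is the isolated $2T_{2n-3}$ (in the displayed statement $T_{n-3}$ should read $T_{2n-3}$); the middle sum equals $\sum_{k=1}^{n-1}2^{k}T_{2n-2k-2}=\sum_{i=1}^{n-1}2^{i}T_{2n-2i-2}$; and the $k\ge2$ part of the first sum reindexes to $4\sum_{i=0}^{n-3}2^{i}T_{2n-2i-5}$, which together with the third sum (itself $\sum_{i=0}^{n-3}2^{i}T_{2n-2i-5}$ after dropping its vanishing top term) gives $5\sum_{i=0}^{n-3}2^{i}T_{2n-2i-5}$. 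These are precisely the three families in the statement.

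The step I expect to carry the weight of the argument is the case analysis in the second paragraph: verifying in each of (i)--(iv) that the cell immediately past the $(2k)$th diagonal is forced exactly as claimed and that a breakpoint reappears at once, so that ``impure tiling'' corresponds bijectively to ``(pure prefix)$\,\times\,$(crossing type)$\,\times\,$(free suffix)'' with nothing omitted and nothing counted twice. The only other point needing care is confirming that the coefficient $5$ really is produced by this reindexing rather than hiding a mishandled short-strip term; the cases $n=3$ and $n=4$ give a quick numerical check.
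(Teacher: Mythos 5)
Your proof is correct, and it follows the same overall strategy as the paper's: interpret $T_{2n}-2^n$ via Lemma~1 as the number of tilings containing a horizontal or left domino, locate the first such ``impurity,'' count the pure prefix with the Lemma, and reindex. The difference is the conditioning variable. The paper conditions on the location (rightmost covered cell) of the first horizontal or left domino, which forces a parity split into locations $2k$, $2k-1$, and the boundary case $2n$, each with its own sub-conditioning on a neighboring cell; you instead condition on the first unbreakable even-numbered diagonal, after observing that squares and right-dominoes cross only odd diagonals while horizontal and left dominoes each cross exactly one even diagonal in $\{2,\dots,2n-2\}$. Your version packages the local analysis into the four possible crossing configurations of that diagonal and absorbs the paper's two boundary cases (locations $2n-1$ and $2n$) into the general term via $T_{-1}=0$, which is arguably cleaner; your intermediate sum $\sum_{k}2^{k-1}\bigl(2T_{2n-2k-1}+2T_{2n-2k-2}+T_{2n-2k-3}\bigr)$ is a different grouping of the same count, and the reindexing to the stated right-hand side checks out. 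You are also right that the first term of the theorem as printed should read $2T_{2n-3}$ rather than $2T_{n-3}$: the paper's own final summation produces $2^{k-1}T_{2n-2k+1}$, whose $k=2$ term is $2T_{2n-3}$, and the numerical check at $n=3$ gives $T_6-2^3=21=2T_3+(2T_2+4T_0)+5T_1$, whereas substituting $2T_0$ for $2T_3$ yields $15$.
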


\begin{proof}
Question: How many $2n$-HD-strips have at least one horizontal domino OR left-domino?
 
Answer 1: $T_{2n}-2^n$. By the lemma above, we subtract the set of $2n$-HD-strips with only right-dominoes and squares from the set of all $2n$-HD-strips.

Answer 2: Condition on the first horizontal domino OR left-domino. Note that the location can range from $3$ to $2n$. If the location is: \begin{enumerate}
    \item $2k$ with $1<k<n$, it must be the location of the first horizontal domino. We find that the tile on cell $2k-3$ must be a square, because it cannot be a right-domino (no space). We further condition on the tile on cell $2k-1$. If it is:
    
    \begin{enumerate}
        \item a square, the $2n$-HD-strip is decomposed into a $(2k-4)$-HD-strip on the left and a $(2n-2k)$-HD-strip on the right. There are $2^{k-2}$ ways to tile the $(2k-4)$-HD-strip by the lemma above and $T_{2n-2k}$ ways to tile the $(2n-2k)$-HD-strip. So there are $2^{k-2}T_{2n-2k}$ tilings.
        
        \includegraphics[width=3in]{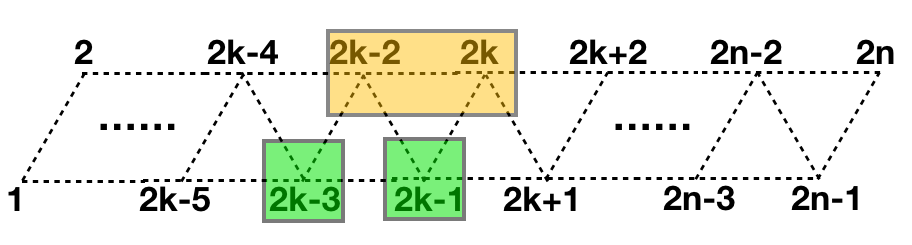}
        
        \item a horizontal domino covering cells $2k-1$ and $2k+1$, similarly, the $2n$-HD-strip is decomposed into a $(2k-4)$-HD-strip on the left and a $(2n-2k-1)$-HD-strip on the right. There are $2^{k-2}$ ways to tile the $(2k-4)$-HD-strip by the lemma above and $T_{2n-2k-1}$ ways to tile the $(2n-2k-1)$-HD-strip. So there are $2^{k-2}T_{2n-2k-1}$ tilings.
        
        \includegraphics[width=3in]{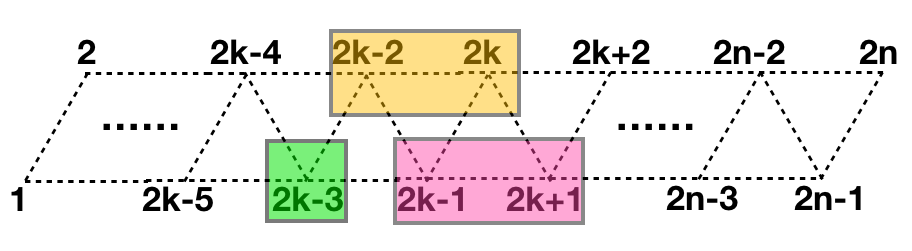}
        
    \end{enumerate}
    \item $2k-1$ with $1<k<n+1$, the tile can be either a left-domino or a horizonal domino. If it is: 
    \begin{enumerate}
        \item a horizontal domino, if the tile covering cell $2k-2$ is:
        \begin{enumerate}
            \item a square, the $2n$-HD-strip is decomposed into a $(2k-4)$-HD-strip on the left and a $(2n-2k+1)$-HD-strip on the right. By lemma 1, there are $2^{k-2}T_{2n-2k+1}$ tilings.
            
            \includegraphics[width=3in]{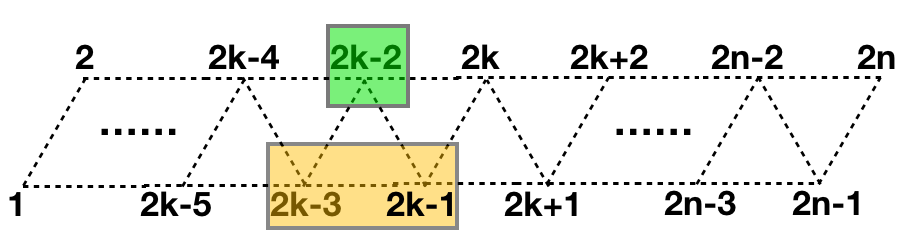}
            
            \item a horizontal domino, similarly, we would have $2^{k-2}T_{2n-2k}$ tilings.
            
            \includegraphics[width=3in]{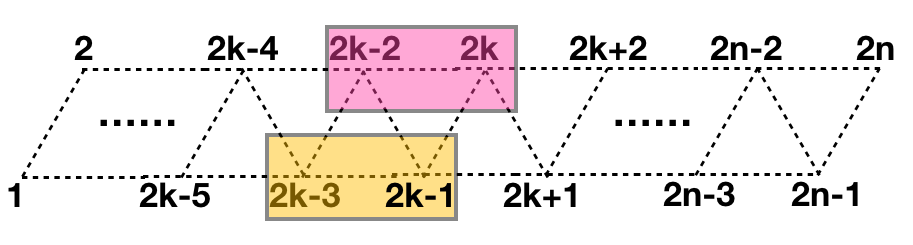}
            
        \end{enumerate}
        \item a left-domino, the tile on cell $2k-3$ must be a square, so we have a $(2k-4)$-HD-strip on the left and a $(2n-2k+1)$-HD-strip on the right. There are $2^{n-2}T_{2n-2k+1}$ tilings.
        
        \includegraphics[width=3in]{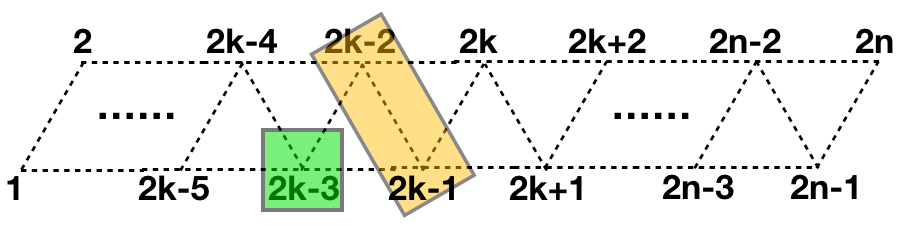}
        
    \end{enumerate}
    
    \item $2n$, the tile here must be a horizontal domino, and the tile on cell $2n-1$ must be a square. Also note that the tile on cell $2n-3$ must be a square as well, since there is no space for a right-domino. Thus, we have a $(2n-4)$-HD-strip on the left. By lemma 1, there are $2^{n-2}$ tilings, which can be written as $2^{n-2}T_0$ to align its form with other terms.
    
    \includegraphics[width=3in]{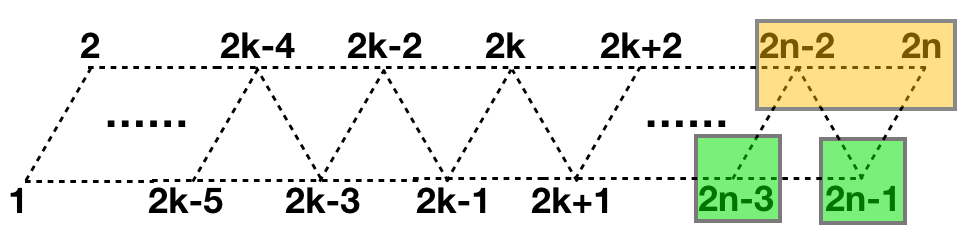}
\end{enumerate}

Adding all the parts together, we have that for each $k$ with $1<k<n$, there are $2^{k-1}T_{2n-2k} + 2^{k-1}T_{2n-2k+1} + 2^{k-2}T_{2n-2k-1}$ tilings, and that there are $2^{n-2}T_0$ tilings for location $2n$ and $2^{n-1}T_1+2^{n-2}T_0$ tilings for location $2n-1$. After uniting like terms, we get $2T_{n-3}+ \sum_{i=1}^{n-1} 2^{i}T_{2n-2i-2} + 5*\sum_{i=0}^{n-3} 2^{i}T_{2n-2i-5}$ tilings.
\end{proof}

Now, we begin to incorporate Fibonacci numbers into the formulas and show their interactions with Tetranacci numbers. One may wonder how the Fibonacci sequence is related to the HD-strips. Now we will show  that under certain conditions the tilings of HD-boards are combinatorial representations of Fibonacci numbers. We begin with two simple identities.

\newtheorem{lemma2}
{lemma2}
\begin{lemma}

If we define $H_n$ to be the number of ways to tile an n-HD-board without horizontal dominoes, we have 
\[
H_n = f_n
\]
\end{lemma}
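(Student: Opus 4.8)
The plan is to set up a bijection between the tilings counted by $H_n$ and the ordinary square-and-domino tilings of an $n$-single-strip, since the latter is exactly the combinatorial model for $f_n$ used in [1]. The key structural fact is that a domino on an HD-strip comes in only two kinds: a horizontal domino, which covers two cells of equal parity (cells $k-2$ and $k$), or an inclined domino, which covers two cells whose numbers differ by $1$ (cells $k-1$ and $k$). Banning horizontal dominoes therefore leaves as legal tiles only squares, covering a single cell, and inclined dominoes, covering two consecutively numbered cells; and for each consecutive pair $\{k-1,k\}$ there is exactly one inclined domino covering it --- a right-domino if $k$ is even, a left-domino if $k$ is odd. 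So a horizontal-domino-free HD-tiling carries exactly the same information as a choice of which consecutive cell numbers are paired up, i.e. a square-and-domino tiling of the linear strip $1,2,\dots,n$.

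First I would make the correspondence explicit. Given a tiling of the $n$-HD-strip with no horizontal domino, scan the cells from left to right and write down a square whenever cell $i$ is covered by a square and a domino (occupying two slots) whenever cells $i$ and $i+1$ are covered by an inclined domino; this yields a square-and-domino tiling of an $n$-single-strip. Conversely, given such a linear tiling, replace each linear domino on positions $\{k-1,k\}$ by the unique inclined domino on HD-cells $\{k-1,k\}$, choosing left or right according to the parity of $k$, and leave squares as squares; this yields a horizontal-domino-free HD-tiling. These two maps are mutually inverse, so $H_n$ equals the number of square-and-domino tilings of an $n$-single-strip, namely $f_n$. As a shortcut one could instead condition on the last tile: a final square leaves a horizontal-domino-free $(n-1)$-HD-strip, and a final tile covering cells $n-1,n$ must be an inclined domino, leaving a horizontal-domino-free $(n-2)$-HD-strip, so $H_n = H_{n-1}+H_{n-2}$ with $H_0=H_1=1$, which is the Fibonacci recurrence with the same initial conditions.

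I do not expect a real obstacle here. The one point that needs to be stated carefully, rather than silently assumed, is the geometric classification of dominoes on an HD-strip --- that every domino is either horizontal on cells $k-2,k$ or inclined on cells $k-1,k$, and that the left/right type of an inclined domino is forced by the parity of its location --- since the whole argument rides on it. This is immediate from the definitions and the cell-numbering picture given just before the theorems, so I would simply quote it explicitly, point to that figure, and then let the bijection (or the recurrence) finish the proof.
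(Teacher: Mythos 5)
Your bijection is exactly the paper's argument made explicit: the paper simply ``stretches'' the horizontal-domino-free $n$-HD-board into an $n$-single strip, which is the same correspondence you describe (with the added care of noting that the left/right type of each inclined domino is forced by parity). The proposal is correct and takes essentially the same approach.
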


\begin{proof}
The proof is quite simple. Without horizontal dominoes, we can ``stretch" the $n$-HD-board into an $n$-single strip. Thus, each tiling of an $n$-single strip corresponds to a tiling of an $n$-HD-board without horizontal dominoes. So $H_n$=$f_n$.
\end{proof}

\newtheorem{lemma3}
{lemma3}
\begin{lemma}
If we define $D_n$ to be the number of ways to tile a 2n-HD-board with $n$ dominoes, we have
\[
D_n = f_n
\]
\end{lemma}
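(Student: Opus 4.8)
The plan is to set up a bijection between the tilings counted by $D_n$ and the square-and-domino tilings of an $n$-single strip, which by [1] number $f_n$. First I would note that since $n$ dominoes cover exactly $2n$ cells, a tiling counted by $D_n$ contains no squares at all; so $D_n$ really counts the all-domino tilings of a $2n$-HD-board, and that is what we must match up with $f_n$.

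Next I would draw the $2$nd, $4$th, $\dots$, $(2n-2)$th diagonals, splitting the board into $n$ double-cells, with double-cell $i$ consisting of cells $2i-1$ and $2i$. The key structural claim is that every all-domino tiling decomposes uniquely into blocks of two kinds: (i) a single inclined (necessarily right-) domino filling one double-cell, namely cells $2i-1,2i$; and (ii) a pair of horizontal dominoes filling two consecutive double-cells, one covering cells $2i-1,2i+1$ and the other covering cells $2i,2i+2$. I would prove this by scanning from the left: by induction the leftmost uncovered cell is always the lower cell $2i-1$ of some double-cell, and the domino covering it is either an inclined domino (which can only be the right-domino on $\{2i-1,2i\}$, giving a type-(i) block) or a horizontal domino on $\{2i-1,2i+1\}$; in the latter case cell $2i$ can no longer be reached by anything except the horizontal domino on $\{2i,2i+2\}$, so a type-(ii) block is forced, and the scan then resumes at the lower cell of the next unfilled double-cell.

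With the decomposition in hand, the bijection is immediate: replace each type-(i) block on double-cell $i$ by a square on cell $i$ of the $n$-single strip, and each type-(ii) block on double-cells $i,i+1$ by a domino on cells $i,i+1$. This is reversible, so $D_n=f_n$. Equivalently, the same decomposition yields the Fibonacci recurrence $D_n=D_{n-1}+D_{n-2}$ after conditioning on the block containing double-cell $1$, together with the base values $D_0=D_1=1$.

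The main obstacle is the forcing step inside the structural claim: one has to be careful with the geometry at the left edge — cell $1$ admits no domino reaching to its left — and with checking that once a horizontal domino enters a double-cell, its partner horizontal domino is genuinely the only way to complete that double-cell; an inclined domino spanning $\{2i,2i+1\}$ is geometrically possible in general but is blocked here because cell $2i+1$ is already occupied. Once this is pinned down, everything else is routine bookkeeping.
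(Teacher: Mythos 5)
Your proposal is correct and uses essentially the same correspondence as the paper: right-dominoes become squares and right-stacked pairs of horizontal dominoes become dominoes on an $n$-single strip. Your left-to-right scanning argument just makes explicit the forcing step (and the exclusion of left-dominoes) that the paper handles more briefly with a parity remark.
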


\begin{proof}
Taking a closer look, we would discover that the case is in fact similar to using squares and dominoes to tile an $n$-single strip. First, there cannot be any left-dominoes, otherwise we would have odd number of cells on both sides of the left-domino, which is impossible for an all-domino tiling. Hence, we are only using right-dominoes and horizontal dominoes. The problem can be reduced to only two cases: right-stacked horizontal dominoes and right-dominoes.

On a $2n$-HD-board, right-stacked horizontal dominoes as a whole correspondes to a domino on a $n$-single strip, and a right-domino correspond to a square on a n-single strip. As a result, each of the tiling of a $2n$-HD-strip with dominoes correspond to a square-domino tiling of an $n$-single strip. So $D_n$=$f_n$.

\includegraphics[width=3.5in]{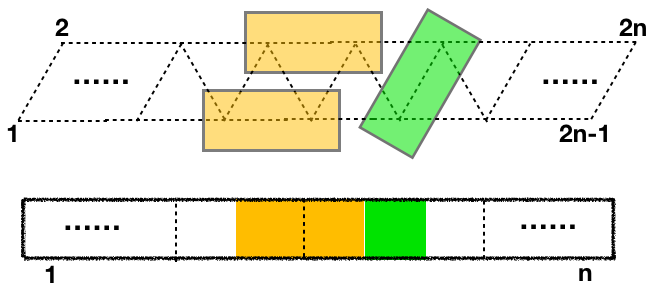}
\end{proof}

Now, we use the two lemmas to prove the next two more complex theorems.

\begin{theorem}
For $n>2$, we have:
\[
T_{2n}-f_n = T_{2n-1} f_1 + T_{2n-3} f_2 + \cdots + T_3 f_{n-1} + T_1 f_{n}
\]
or in other words,
\[
T_{2n} - f_n = \sum_{i=1}^{n} T_{2n+1-2i} f_i
\]
\end{theorem}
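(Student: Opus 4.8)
I would argue in the two-count style used throughout the paper, counting the tilings of a $2n$-HD-board that contain at least one square. For \emph{Answer 1}, note that by Lemma~3 (the number of all-domino tilings of a $2n$-HD-board equals $f_n$) exactly $f_n$ tilings use no square, so $T_{2n}-f_n$ tilings contain a square. For \emph{Answer 2}, I would condition on the location $c$ of the leftmost square: then cells $1,\dots,c-1$ are covered by dominoes only, and cell $c$ holds a square.

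The crux is a parity observation. Since no domino can cover cell $c$, the dominoes lying strictly to the left of that square cover a set of cells of even size. If $c$ is odd, the block $\{1,\dots,c-1\}$ already has even size, and no left domino can reach past cell $c$: the only candidate, a horizontal domino on cells $c-1$ and $c+1$, would leave the odd block $\{1,\dots,c-2\}$ to be covered by dominoes, which is impossible. Hence cells $1,\dots,c-1$ form a self-contained all-domino region, tileable in $f_{(c-1)/2}$ ways by Lemma~3, while cells $c+1,\dots,2n$ form an independent $(2n-c)$-HD-strip, tileable in $T_{2n-c}$ ways; this contributes $f_{(c-1)/2}\,T_{2n-c}$ tilings. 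If $c$ is even, the block $\{1,\dots,c-1\}$ has odd size, so some left domino is forced to reach one more cell, and the only possibility is a horizontal domino on cells $c-1$ and $c+1$. After placing it (and the square on cell $c$), cells $1,\dots,c-2$ are a self-contained all-domino region ($f_{(c-2)/2}$ ways) and cells $c+2,\dots,2n$ are a free $(2n-c-1)$-HD-strip ($T_{2n-c-1}$ ways), contributing $f_{(c-2)/2}\,T_{2n-c-1}$ tilings (and $0$ when $c=2n$, consistent with $T_{-1}=0$).

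Summing over $c$ then finishes the proof. Writing $c=2j+1$ in the odd case and $c=2j$ in the even case, the contributions with the same right-hand free strip (of length $2n-2j-1$) are $f_j\,T_{2n-2j-1}$ and $f_{j-1}\,T_{2n-2j-1}$; adding these and using the Fibonacci recurrence $f_j+f_{j-1}=f_{j+1}$ gives $f_{j+1}\,T_{2n-2j-1}$, and summing over $j$ (with $f_0=f_1$ and $T_{-1}=0$ absorbing the boundary terms) collapses the total to $\sum_{i=1}^{n}T_{2n+1-2i}\,f_i$. By the left-right symmetry of the HD-strip, conditioning on the rightmost square instead would work the same way.

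I expect the main obstacle to be the forcing-and-independence bookkeeping: one must check carefully that in the even case the horizontal domino across cells $c-1$ and $c+1$ is genuinely forced, and that in both parities no tile straddles the square, so that the all-domino block on the left really is counted by Lemma~3 and the free strip on the right by the definition of $T$. One also needs the easy fact that a $k$-cell HD-strip has $T_k$ tilings whether its left end lies in the top or bottom row (used for the right-hand piece when $c$ is odd), and should verify the endpoints $c=1,2,2n$ against the closed form.
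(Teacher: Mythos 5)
Your proposal is correct and follows essentially the same route as the paper's own proof: both count tilings of a $2n$-HD-board with at least one square, use Lemma~3 for the all-domino count $f_n$, condition on the location of the first square split by parity, and merge the two parity cases with $f_{j}+f_{j-1}=f_{j+1}$. Your treatment of the forced horizontal domino in the even case and of the boundary locations $c=1$ and $c=2n$ is in fact slightly more careful than the paper's.
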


\begin{proof}

Question: How many tilings of an 2n-HD-board have at least one square?
 
Answer 1: $T_{2n}-f_n$. By definition, there are $T_{2n}$ tilings of an $2n$-HD-board, and by lemma 3, there are $f_n$ tilings of an all-domino $2n$-HD-board.
 
Answer 2: Condition on the location of the first square of a $2n$-HD-strip. It can take every integer value from $1$ to $2n-1$. When the location is:
\begin{itemize}
    \item $1$, there are $T_{2n-1}$ tilings, which can also be written as $T_{2n-1}f_1$ tilings in order to align its form with the remaining terms.
    
    \includegraphics[width=3in]{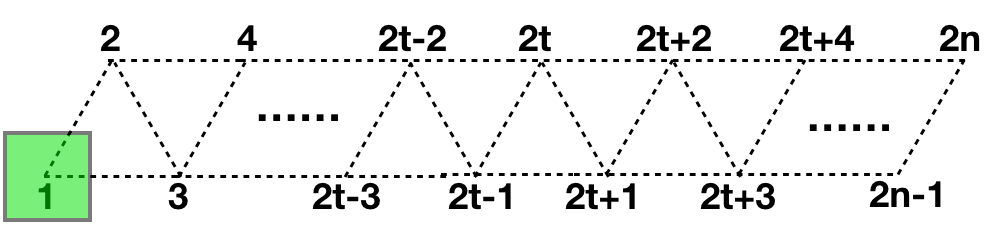}
    
    \item $k$ with $1<k<2n$, further condition on the parity of $k$. If $k$ equals:
    \begin{itemize}
        \item $2t$ with $0<t<n$, since k is the location of the first square, with a couple of trials we can find that there must be a horizontal domino that lies on cells $2t-1$ and $2t+1$, as illustrated by the graph. Hence, there is an all-domino $(2t-2)$-HD-strip on the left and a square-domino $(n-2t-1)$-HD-strip on the right. From lemma 3, we can figure out that there are $f_{t-1}T_{n-2t-1}$ , namely $f_{t-1}T_{2n-2t-1}$ tilings.
        
        \includegraphics[width=3in]{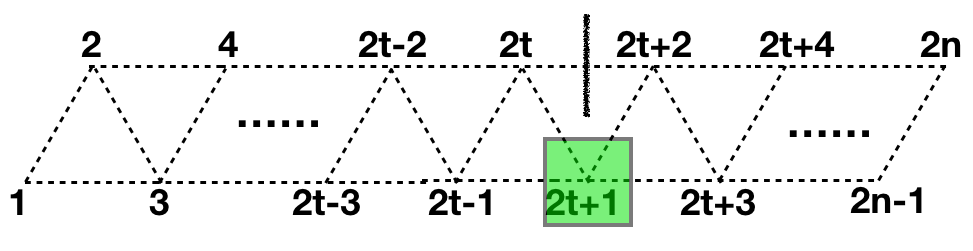}
        
        \item $2t+1$ with $0<t<n$, there cannot be a horizontal domino that covers cells $2t$ and $2t+2$, otherwise there are odd number of cells on the left, which is impossible for an all-domino tiling. Thus, the $2n$-HD-strip can be decomposed into an all-domino $2t$-HD-strip and a square-domino $(2n-2t-1)$-HD-strip. By lemma 3, there are $f_tT_{2n-2t-1}$ tilings.
        
        \includegraphics[width=3in]{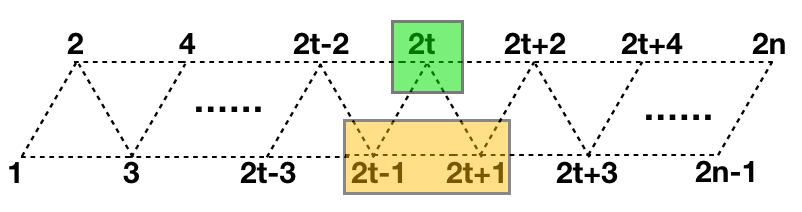}
        
        Since $f_t+f_{t-1}=f_{t+1}$, for each $t$, there are $f_{t+1}T_{n-2t-1}$ tilings. So overall there are $f_2T_{2n-3} + f_3T_{2n-5}+ \cdots + f_nT_1$ tilings when $1<k<2n$.
    \end{itemize}
\end{itemize}
Adding the two parts together, we have $T_{2n-1} f_1 + T_{2n-3} f_2 + \cdots + T_3 f_{n-1} + T_1 f_{n}$ tilings.

\end{proof}

\begin{theorem}
For $n>4$, we have:
\[
T_n - f_n = f_1T_{n-3} + f_2T_{n-4} + f_3T_{n-5} + \dots + f_{n-2}T_0
\]
or in other words, 
\[
T_n - f_n = \sum_{i=1}^{n-2}f_iT_{n-i-2}
\]
\end{theorem}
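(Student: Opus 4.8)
The plan is to argue in the same ``answer a counting question two ways'' style as the earlier theorems. I would ask: \emph{how many tilings of an $n$-HD-board contain at least one horizontal domino?} For the first answer, Lemma~2 says that the tilings with \emph{no} horizontal domino are precisely the tilings of an $n$-single strip, of which there are $f_n$; hence the count is $T_n - f_n$. For the second answer, I would condition on the location $k$ of the \emph{first} horizontal domino, which ranges over $3 \le k \le n$ (a horizontal domino at location $k$ covers cells $k-2$ and $k$).

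The core of the proof is decomposing the board at that first horizontal domino. First observe that cell $k-1$ can only be a square or the ``left'' cell of a horizontal domino covering cells $k-1$ and $k+1$: an inclined domino at cell $k-1$ would require cell $k-2$ or cell $k$, both occupied, and a horizontal domino covering cells $k-3$ and $k-1$ would have location $k-1<k$, contradicting minimality of $k$. In either case, cells $1,\dots,k-3$ form a standalone sub-board that, again by minimality of $k$, contains no horizontal domino, so by Lemma~2 it can be tiled in $f_{k-3}$ ways. On the right: if cell $k-1$ is a square, then cells $k+1,\dots,n$ form a free $(n-k)$-HD-strip, contributing $T_{n-k}$ tilings; if cell $k-1$ is covered by a horizontal domino together with cell $k+1$ (possible only when $k<n$), then cells $k+2,\dots,n$ form a free $(n-k-1)$-HD-strip, contributing $T_{n-k-1}$ tilings. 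Summing over $k$, the second answer is $\sum_{k=3}^{n-1} f_{k-3}\big(T_{n-k}+T_{n-k-1}\big) + f_{n-3}T_0$.

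It then remains to match this with the stated right-hand side by reindexing. Writing the two pieces as $\sum_{j=0}^{n-3} f_j T_{n-j-3}$ and $\sum_{j=0}^{n-4} f_j T_{n-j-4}$ and substituting $i=j+1$ in the first and $i=j+2$ in the second, the total becomes $f_0 T_{n-3} + \sum_{i=2}^{n-2}\big(f_{i-1}+f_{i-2}\big)T_{n-i-2}$, which collapses to $\sum_{i=1}^{n-2} f_i T_{n-i-2}$ via the Fibonacci recurrence $f_i=f_{i-1}+f_{i-2}$ together with $f_0=f_1$.

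I expect the main obstacle to be the geometric bookkeeping in the middle step: verifying carefully that no tile straddles either cut produced by the first horizontal domino, and that the enumeration of possibilities for cell $k-1$ (a square, or a horizontal domino reaching forward to cell $k+1$) is genuinely exhaustive, including the boundary case $k=n$ where the second possibility vanishes. Once that case analysis is nailed down, the closing algebra is routine apart from care with the endpoint terms in the reindexing.
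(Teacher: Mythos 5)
Your proposal is correct and follows essentially the same route as the paper: the same counting question, the same conditioning on the location $k$ of the first horizontal domino and on the tile at cell $k-1$, the same $f_{k-3}$ count for the horizontal-domino-free left piece via Lemma~2, and the same Fibonacci-recurrence merge at the end. Your added justification that cell $k-1$ can only be a square or the left half of a horizontal domino reaching to cell $k+1$ is a detail the paper leaves implicit, but the argument is otherwise identical.
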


\begin{proof}
Question: How many tilings of an $n$-HD-board have at least one horizontal domino?
 
Answer 1: $T_n - f_n$. By definition, there are $T_n$ tilings of an $n$-HD-board, and by lemma 2, there are $f_n$ tilings of an $n$-HD-board without horizontal dominoes.
 
Answer 2: Condition on the location of the first horizontal domino. Note that the location can be any integer value from $3$ to $n$. For the first horizontal domino with location k, we further condition on the tile on cell $k-1$. If it is:

\begin{enumerate}
    \item a square, the $n$-HD-strip is decomposed into a $(k-3)$-HD-strip on the left and an $(n-k)$-HD-strip on the right. By lemma 2, there are $f_{k-3}$ tilings on the left and $T_{n-k}$ tilings on the right. Therefore, we have $f_{k-3}T_{n-k}$ tilings for each $k$.
    
    \includegraphics[width=3in]{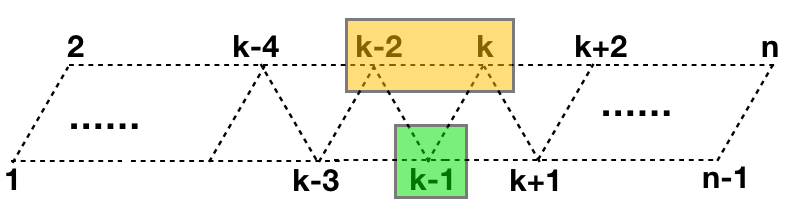}
    
    \item a horizontal domino (when $k \ne n$) covering cells $k-1$ and $k+1$ ,the $n$-HD-strip is decomposed into a $(k-3)$-HD-strip on the left and an $(n-k-1)$-HD-strip on the right. By lemma 2, there are $f_{k-3}$ tilings on the left and $T_{n-k-1}$ tilings on the right. Thus, we have $f_{k-3}T_{n-k-1}$ tilings for each $k$.
    
    \includegraphics[width=3in]{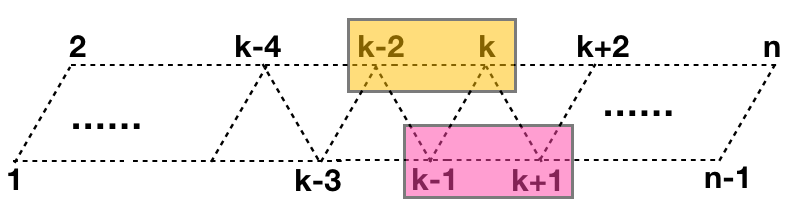}

\end{enumerate}

Overall, there are $(f_0T_{n-3} + f_1T_{n-4} + f_2T_{n-5} + \dotsc + f_{n-3}T_0) + (f_0T_{n-4} + f_1T_{n-5} +f_2T_{n-6} + \dots + f_{n-4}T_0)$ tilings. Since $f_n = f_{n-1} + f_{n-2}$ by definition, we get $f_1T_{n-3} + f_2T_{n-4} + f_3T_{n-5} + \dots + f_{n-2}T_0$ after merging the terms two by two and rewriting $f_0T_{n-3} $ as $f_1T_{n-3}$.

\end{proof}

\begin{theorem}
For $n>2$, we have:
\[
T_{2n} - (f_n)^2= \sum_{i=1}^{n} f_{i-1}^2 T_{2n-2i} + \sum_{i=2}^{n} f_{i-2} f_{i-1} T_{2n-2i+1}
\]
\end{theorem}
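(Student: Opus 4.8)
The plan is to prove this by the same kind of double-counting used for Theorems 6, 8 and 9: fix a property of tilings of a $2n$-HD-board and count the tilings having it in two ways. The property to use is ``contains at least one inclined domino.'' For Answer 1, note that a tiling with \emph{no} inclined domino leaves the two rows completely decoupled: the lower row (cells $1,3,\dots,2n-1$) and the upper row (cells $2,4,\dots,2n$) are each just an $n$-cell single strip tiled with squares and horizontal dominoes, so each contributes $f_n$ tilings and there are $f_n\cdot f_n=f_n^2$ in all. Hence exactly $T_{2n}-f_n^2$ tilings have an inclined domino. I would state this count of $f_n^2$ as a one-line lemma just before the theorem, parallel to Lemmas 2 and 3.

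For Answer 2, condition on the location $\ell$ of the \emph{first} inclined domino, reading left to right. Every tile strictly to its left is a square or a horizontal domino, so that left portion is again a decoupled pair of single strips, while the part strictly to its right is an unconstrained HD-board; it remains to split on the parity of $\ell$. If $\ell=2m$ the first inclined domino is a right-domino on cells $2m-1,2m$; to its left sit the lower cells $1,3,\dots,2m-3$ and the upper cells $2,4,\dots,2m-2$, two single strips of $m-1$ cells each (so $f_{m-1}$ tilings apiece), and cells $2m+1,\dots,2n$ are free ($T_{2n-2m}$ tilings). Putting $i=m$ and letting $i$ run over $1,\dots,n$ yields the term $f_{i-1}^2T_{2n-2i}$, i.e.\ the first sum. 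If $\ell=2m-1$ the first inclined domino is a left-domino on cells $2m-2,2m-1$; now the left portion is the lower strip on cells $1,3,\dots,2m-3$ (an $(m-1)$-strip) and the upper strip on cells $2,4,\dots,2m-4$ (an $(m-2)$-strip), with cells $2m,\dots,2n$ free ($T_{2n-2m+1}$ tilings). Putting $i=m$ with $i$ over $2,\dots,n$ yields $f_{i-2}f_{i-1}T_{2n-2i+1}$, the second sum. Since every tiling with an inclined domino is counted exactly once (by its first one), summing over $\ell\in\{2,3,\dots,2n\}$ --- the endpoints $\ell=2$, $\ell=2n-1$, $\ell=2n$ being degenerate cases with empty strips and/or $T_0=1$ --- reproduces the right-hand side.

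The only step needing genuine care is checking that ``first inclined domino at $\ell$'' really does cut the board into the three independent pieces claimed: one verifies that no horizontal domino can straddle either cut, because the cell immediately on each side of the cut is already covered by the inclined domino, and that the number of upper versus lower cells lying to the left comes out with the stated, parity-dependent sizes ($m-1$ and $m-1$ for $\ell$ even, $m-1$ and $m-2$ for $\ell$ odd). One should also note that a right-hand piece may start or end on the opposite row from a standard HD-board but is isomorphic to one via the flip exchanging the two rows, so it still contributes the expected $T$-value. Everything else is routine; as a sanity check, for $n=3$ the identity reads $T_6-f_3^2=29-9=20$, matching $(f_0^2T_4+f_1^2T_2+f_2^2T_0)+(f_0f_1T_3+f_1f_2T_1)=(8+2+4)+(4+2)$.
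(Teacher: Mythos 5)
Your proposal is correct and follows essentially the same argument as the paper: count tilings of a $2n$-HD-board with at least one inclined domino, get $T_{2n}-(f_n)^2$ by decoupling the two rows into $n$-single strips when no inclined domino is present, and then condition on the location and orientation (right- vs.\ left-domino) of the first inclined domino to obtain the terms $f_{i-1}^2T_{2n-2i}$ and $f_{i-2}f_{i-1}T_{2n-2i+1}$. Your added care about why the cuts are clean and your numerical check at $n=3$ go slightly beyond what the paper writes down, but the underlying decomposition is identical.
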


\begin{proof}
Question: How many tilings of a $2n$-HD-board with at least one inclined domino are there?
 
Answer 1: $T_{2n} - (f_n)^2$. Without inclined dominoes, the $2n$-HD-board can be horizontally separated into two n-single strips, which have $(f_n)^2$ tilings. Subtract them from the set of all $2n$-HD-board tilings will result in $T_{2n} - (f_n)^2$ tilings.
 
Answer 2: Condition on the location of the first inclined domino. The first inclined domino can be either a left-domino or a right-domino. If it is a:
\begin{enumerate}
    \item right-domino with location $2k$ ($0<k<n+1$), there are $(f_{k-1})^2$ tilings on the left of the right-domino and $T_{2n-2k}$ tilings on its right. Thus, there are $(f_{k-1})^2T_{2n-2k}$ tilings overall for each $k$.
    
    \includegraphics[width=3in]{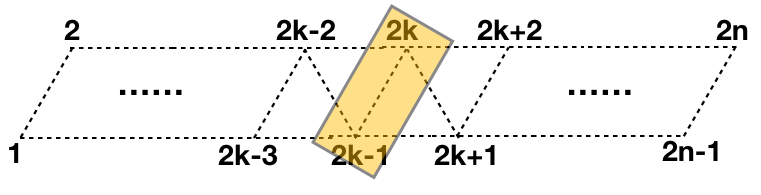}
    
    \item left-domino with location $2k-1$ ($1<k<n+1$), there are $f_{k-2}f_{k-1}$ tilings on the left of the left-domino and $T_{2n-2k+1}$ tilings on its right. So there are $(f_{k-2})(f_{k-1})T_{2n-2k+1}$ tilings overall for each $k$.
    
    \includegraphics[width=3in]{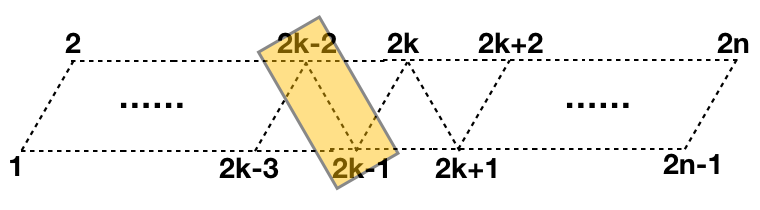}
    
\end{enumerate}

Similarly, we have: 
\[
T_{2n+1} - f_nf_{n+1} = \sum_{i=1}^{n} f_{i-1}^2 T_{2n-2i+1} + \sum_{i=1}^{n} f_{i-1} f_{i} T_{2n-2i+2}
\]
and the proof deals with an $(2n+1)$-HD-strip, using the same approach as the one above.

\end{proof}
\section{Acknowledgements}
I would like to offer my special thanks to Professor Dresden for inspiring my ideas about formulas, helping me find relevant articles for reference, and checking my writing and format. I also want to thank the Pioneer Academics Program for the opportunity to complete this research project.

\section{References}

[1] A. T. Benjamin and J. J. Quinn, \emph{Proofs That Really Count-The Art of Combinatorial Proof}, Mathematical
Association of America, Washington, DC, 2003, 8–9.
\ 

\noindent
[2] Marcellus E. Waddill, \emph{Tetranacci Sequence and Generalizations}, The Fibonacci Quarterly, 30.1 (1992), 9-19.
\ 

\noindent
[3] F. T. Howard and C. Cooper, \emph{Some Identities for r-Fibonacci Numbers}, The Fibonacci Quarterly 49.3
(2011), 231–243.
\ 

\noindent
[4] A. T. Benjamin and C. R. Heberle, \emph{Counting On r-Fibonacci Numbers}, The Fibonacci Quarterly 52.2 (2014), 121-128.
\ 

\noindent
[5] N. J. A. Sloane (Ed.), \emph{The On-Line Encyclopedia of Integer Sequences (2008)},
\href{https://oeis.org/}{https://oeis.org/}

\end{document}